\title{Invariant embeddings and weighted permutations}
\author{M.~Mastnak}
\address{Department of Mathematics and Computing Science, Saint Mary's University, 923 Robie St, Halifax, Nova Scotia, Canada B3N 1Z9}
\email{mmastnak@cs.smu.ca}
\author{H.~Radjavi}
\address{Department of Pure Mathematics, University of Waterloo, Waterloo, Ontario,
Canada N2L 3G1}
\email{hradjavi@uwaterloo.ca}
\subjclass{15A30}
\thanks{M.~Mastnak partially supported by NSERC Discovery Grant 371994-2019.}
\def\M{\mathcal{M}}
\def\A{\mathcal{A}}
\def\X{\mathcal{X}}
\def\Mn{\mathcal{M}_n}
\def\Mnc{\mathcal{M}_n(\mathbb{C})}
\def\B{\mathcal{B}}
\def\H{\mathcal{H}}
\def\N{\mathcal{N}}
\def\G{\mathcal{G}}
\def\V{\mathcal{V}}
\def\W{\mathcal{W}}
\def\span{\operatorname{span}}
\def\CC{\mathbb{C}}
\newtheorem{theorem}{Theorem}
\newtheorem{corollary}[theorem]{Corollary}
\newtheorem{lemma}[theorem]{Lemma}
\newtheorem{proposition}[theorem]{Proposition}
\newtheorem{remark}[theorem]{Remark}
\newtheorem{example}[theorem]{Example}
\newtheorem{question}[theorem]{Question}
\begin{document}

\maketitle

\begin{abstract}  
We prove that for any fixed unitary matrix $U$, any abelian self-adjoint algebra of matrices that is invariant under conjugation by $U$ can be embedded into a maximal abelian self-adjoint algebra that is still invariant under conjugation by $U$.  We use this result to analyse the structure of matrices $A$ for which $A^*A$ commutes with $AA^*$, and to characterize matrices that are unitarily equivalent to  weighted permutations.  
\end{abstract}

\section{Introduction}

This short paper resulted from our work on the following two topics of natural, independent, interest. They happen to be related as shown below.

(a) Let $\mathcal{A}$ be an abelian semisimple subalgebra of $\Mnc$ that is invariant under conjugation by a group $\mathcal{G}$ of a matrices, i.e., for every $G$ in $\mathcal{G}$,
$$
G^{-1}\mathcal{A} G := \{G^{-1} A G : A\in\G\} \subseteq \A.
$$
Can $\A$ be embedded into a maximal semisimple commutative subalegbra of $\Mnc$ that is still invariant under conjugation by $\mathcal{G}$?

In particular, if $\A$ is an abelian self-adjoint subalgebra of $\Mnc$ and $U$ is a unitary matrix such that $U^*\A U\subseteq \A$, does there exist a maximal abelian self-adjoint algebra (masa) $\widehat{\A}$ such that $\A\subseteq\widehat{\A}$ and $U^*\widehat{\A} U\subseteq \widehat{\A}$?

This question can obviously  be asked about
self-adjoint algebras of operators on infinite-dimensional Hilbert space, but this will be studied in \cite{MR2}. In the present note we give an affirmative answer in the finite-dimensional setting, not just in this particular case of 
$\mathcal{G}=\langle U\rangle$, but more generally, when $\G$ is
completely reducible and its non-abelian subquotients (as abstract groups)
contain noncentral normal abelian subgroups.

(b) Which matrices are unitarily equivalent to weighted permutations, i.e., matrices of the form $DV$, where $D$ is diagonal and $V$ is a permutation matrix?

It is easy to see that such matrices $A$ are half-normal, that is, $A^* A$ and 
$A A^*$ commute. (The terms ``semi-normal", ``skew-normal" and ``quasi-normal", which are easier on the ear, have already been claimed.  They generalize the notion of normality in infinite dimensions, but in finite dimensions they are equivalent to normality.)  So, the question becomes: which half-normal matrices are
unitarily equivalent to weighted permutations?

Using our answers to (a), we show that $A$ is unitarily equivalent to a 
weighted permutation if and only if it is a limit of half-normal operators
$A_n$ with pairwise distinct singular values.  The singular values of T are, by definition, the eigenvalues of the positive part $P$ in the polar decomposition $PU$ of $T$.  These are, equivalently, the eigenvalues of $(T^* T)^\frac{1}{2}$
or those of $(T T^*)^\frac{1}{2}$.  

It is interesting that not every half-normal operator is a limit of this special kind, i.e., half-normal operators with pairwise distinct singular values.

\section{Preliminaries}

Even though many of the results of this paper hold for more general fields, we restrict ourselves, for the sake of simplicity, to the field of complex numbers $\CC$.  We use notation $\M_n(\CC)$ to denote the algebra of $n\times n$ matrices with entries in $\CC$. The standard basis vectors of $\CC^n$ are denoted by $e_1,\ldots, e_n$.  

We say that a collection $\X\subseteq\M_n(\CC)$ of matrices is irreducible if it has no common nontrivial invariant subspaces.  We say that it is completely reducible if we can decompose $\CC^n=\V_1+\ldots+\V_r$ into a direct sum of $\X$-invariant subspaces on which $\X$ acts irreducibly.  (We are aware of the irony that irreducible collections are also completely reducible.)

For an invertible $G\in\M_n(\CC)$ and a subset $\X\subseteq\M_n(\CC)$ we say that $\X$ is $G$-invariant if for every $X\in\X$ we have that $G^{-1}XG\in\X$.  If $\G$ is a collection of invertible matrices, then we say that $\X$ is $\G$ invariant if it is invariant under conjugation by every $G\in\G$.

By a weighted permutation or a monomial matrix we mean a matrix that has at most one nonzero entry in every column and every row.  Note that every weighted permutation $A$ can be written as $A=DP$, where $D$ is a diagonal matrix and $P$ is a permutation matrix (i.e., for some permutation $\pi\colon\{1,\ldots, n\}\to \{1,\ldots, n\}$ we have that $Pe_j=e_{\pi(j)}, j=1,\ldots, n$); we refer to the diagonal entries of $D$ as weights.  If $A$ is invertible then such a decomposition is unique.

We say that a collection $\X\subseteq\M_n(\CC)$ of matrices is monomializable if there is a basis of the underlying vector spaces $\V=\CC^n$ in which all matrices in $\X$ are weighted permutations.  
For a group $\G\subseteq\M_n(\CC)$ this is equivalent to demanding that the corresponding representation is induced from a sum of one-dimensional representations of a subgroup.  For group representations usually the term``monomial" is used for what we refer to as ``monomializable".

\section{Invariant embeddings of commutative semi-simple algebras}

In this section we explore the following question: Let $\G\subseteq\mathcal{M}_n(\mathbb{C})$ be a fixed group of matrices and let $\mathcal{A}\subseteq \M_n(\mathcal{C})$ be a semi-simple commutative algebra that is invariant under conjugation by $\G$.  Can we embed $\A$ into a maximal commutative semi-simple algebra (i.e., an algebra simultaneously similar to the algebra of all diagonal matrices) that is still $\G$-invariant?  An equivalent way to rephrase the above question is as follows: Let $\G\subseteq\M_n(\CC)$ be a group and let $\mathcal{X}\subseteq\M_n(\CC)$ be a set of diagonal matrices that is invariant under conjugation by $\G$.  Is there  a basis of $\V=\CC^n$ in which $\G$ consists of weighted permutations and $\X$ is still diagonal? (Note that if some set is invariant under conjugation by $\G$, then so is the algebra generated by the set.)
Without imposing additional assumptions, the answer is obviously negative.  For example, if the group $\H=\langle\G,\X\rangle$ generated by $\G$ and invertible elements of $\X$ is not monomializable (i.e., not simultaneously similar to a subgroup of weighted permutations).  In the extreme: the set $\X$ of all scalar matrices is obviously invariant under conjugation by $\G=GL_n(\CC)$, but clearly cannot be enlarged to a larger $\G$-invariant set.

We briefly discuss properties that $\G$ may posses (as an abstract group):
\begin{enumerate}
\item Every irreducible representation of $\G$ is monomializable
\item Every non-abelian subquotient of $\G$ has a noncentral abelian normal subgroup
\item Every non-abelian subquotient of $\G$ has a cyclic normal subgroup 
\item Every non-abelian subquotient of $\G$ has a nontrivial centre
\end{enumerate}

It is fairly easy to see (and it is implicit in the proofs of Chapter 3 of \cite{Brall}) that $(4)\implies (3)\implies (2)$.  If $\G$ is a finite group, then the condition (3) is equivalent to $\G$ being supersolvable (in fact, the condition (3) is how R.~Baer originally defined supersolvable \cite{Ba}) and the condition (4) is equivalent to $\G$ being nilpotent.  In cases (3) and (4) we need to check the condition for quotients only (and the conculusion for the sub-quotients follows)\cite{Ba}.

The (abstract) groups satisfying condition (1) are usually referred to as M-groups and sometimes also as monomial groups (usually the notion is described by demanding that every irreducible representation of $\G$ is induced from one-dimensional representations of subgroups).
 It is well known that this condition is not hereditary.  If we assume that condition (1) is satisfied for all subgroups of $\G$ (i.e., all irreducible representations of all subgroups of $\G$ are monomializable), then this is equivalent to condition (2) \cite[Theorem 3.4]{Brall} (the just mentioned theorem only establishes that (2) implies (1); the implication that any hereditary family of groups satisfying (1) must also satisfy (2) is implicit in some of the discussions of Chapter 3 in \cite{Brall} and is also easy to see). For the sake of completeness and also, to make this result more accessible to mathematicians less familiar with group representation theory, we present a coordinate version of the proof below.
\begin{theorem}[{\cite[Thm 3.4]{Brall}}]\label{Mgroup} Let $\G$ be a group.
If every nonabelian sub-quotient of $\G$ contains a noncentral abelian normal subgroup, then every irreducible reprentation of every subgroup is monomializable.
\end{theorem}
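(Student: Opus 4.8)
The plan is to argue by induction on $|\G|$. Since any subquotient of a subgroup of $\G$ is again a subquotient of $\G$, the hypothesis is inherited by every subgroup; hence it suffices to prove that a group $\G$ with the stated property has the feature that \emph{every} one of its irreducible representations is monomializable, and then apply this to each subgroup of $\G$ in turn. For the inductive step, let $\rho\colon\G\to GL(\V)$ be an irreducible representation, $\V=\CC^n$. If $\dim\V=1$ there is nothing to prove, and if $\G$ is abelian then $\dim\V=1$ automatically. If $\rho$ is not faithful it factors through the strictly smaller group $\G/\ker\rho$, which again has the property, so the inductive hypothesis finishes this case. Thus we may assume $\G$ is nonabelian and $\rho$ is faithful.

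By hypothesis $\G$ then contains a noncentral abelian normal subgroup $N$. The key first observation is that $\rho(N)$ cannot consist of scalar matrices: if it did, then for every $g\in\G$ and $n\in N$ the commutator $gng^{-1}n^{-1}$ would lie in $\ker\rho=\{1\}$, forcing $N\subseteq Z(\G)$, a contradiction. Since $N$ is abelian, the commuting family $\rho(N)$ is simultaneously diagonalizable, and its joint eigenspaces are the isotypic components $\V_\chi$ indexed by the characters $\chi$ of $N$ that actually occur; because $\rho(N)$ is non-scalar there are at least two of them. Conjugation permutes these blocks, $\rho(g)\V_\chi=\V_{g\cdot\chi}$ with $(g\cdot\chi)(n)=\chi(g^{-1}ng)$, and irreducibility of $\rho$ forces this permutation action to be transitive (otherwise the sum of the blocks in a single orbit would be a proper invariant subspace). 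Fix one block $\V_1=\V_{\chi_1}$ and let $H=\{g\in\G:\rho(g)\V_1=\V_1\}$ be its stabilizer; transitivity on at least two blocks makes $H$ a proper subgroup.

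Finally I would check, in coordinates, that $\V_1$ is an irreducible $H$-module (a proper nonzero $H$-submodule $W\subseteq\V_1$ would spread, via a transversal $g_1,\dots,g_t$ of $H$ in $\G$, to a proper nonzero $\G$-submodule $\bigoplus_i\rho(g_i)W$ of $\V=\bigoplus_i\rho(g_i)\V_1$) and that $\rho$ is the representation induced from $\rho|_{\V_1}$. Since $H$ is strictly smaller and inherits the hypothesis, the inductive hypothesis provides a basis of $\V_1$ in which $H$ acts by weighted permutations; applying the $\rho(g_i)$ to this basis produces a basis of $\V$ in which, because $gg_i=g_{j}h$ with $h\in H$ for a suitable index $j$ depending on $g$ and $i$, every $\rho(g)$ sends each basis vector to a scalar multiple of another basis vector, and being invertible, $\rho(\G)$ thus consists of weighted permutations. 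I expect the bulk of the work to be exactly this Clifford-theoretic bookkeeping carried out by hand: verifying that the joint eigenspaces of $\rho(N)$ form a single $\G$-orbit, that the restriction to one block is irreducible, and that the transversal construction genuinely yields monomial matrices; the remaining reductions and the induction are routine.
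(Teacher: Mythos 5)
Your argument is, at its core, the same as the paper's: the decomposition of $\V$ into joint eigenspaces of $\rho(N)$, the transitive permutation of those blocks forced by irreducibility, the stabilizer $H$ acting irreducibly on a single block, and the transversal construction are precisely the coordinate form of Clifford's theorem that the paper imports from its reference and then combines with a minimal-counterexample argument; you have simply unpacked that black box, which is fine and even clarifying.

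The one substantive discrepancy is the induction parameter, and it matters because the theorem as stated does not assume $\G$ is finite. For an infinite group, induction on $|\G|$ breaks down in two places. First, the proper stabilizer $H$ can have the same cardinality as $\G$ (it has finite index at most $n$, but that does not reduce $|\G|$), so the inductive call on $H$ is not justified. Second, the step ``the commuting family $\rho(N)$ is simultaneously diagonalizable'' is no longer automatic: a commuting family of invertible matrices need not be diagonalizable (think of unipotent elements); for finite $N$ this follows from finite order of the elements, but in general one needs the Clifford-theoretic fact that a normal subgroup of an irreducible linear group is completely reducible. Both points are repaired by doing what the paper does: induct on (or take a counterexample minimal in) the dimension $n$ of the representation rather than the order of the group. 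Then $\dim\V_1=n/t<n$ makes the inductive call on $H$ acting on $\V_1$ legitimate, and the diagonalizability of $\rho(N)$ comes packaged with the coordinate version of Clifford's theorem. (The non-faithful case then needs no induction at all: just replace $\G$ by its image, since the hypothesis passes to quotients.) If you intend the statement only for finite groups, your proof is complete as written, modulo the deferred bookkeeping, which is standard.
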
 
\begin{proof} 
Suppose, toward a contradiction, that there is a non-monomializable irreducible group $\G\subseteq\M_n(\mathbb{C})$ such that  every nonabelian subquotient of $\G$ contains a noncentral abelian normal subgroup.  Assume also that $n$ is the smallest integer for which such a group exists.
Clearly this means that $\G$ is not abelian. Let $\N$ be a noncentral abelian normal subgroup.  By Clifford's Theorem (see \cite[Section 2.2]{MR1} for a coordinate version that we use below) we can assume that $n=rs$ with $r>1$ and that, up to simultaneous similarity, we have the following:
\begin{enumerate}
\item $\G$ is $r\times r$ block-monomial with all blocks of size $s\times s$.
\item The set $\G_{i,j}$ of nonzero elements in each block $(i,j)$
of $\G$ is individually equal to a fixed irreducible group $\G_{1,1}\subseteq\M_{s}(\CC)$. 
\item $\N$ is block diagonal and each diagonal block of $\N$ is scalar.
\end{enumerate}
(Note that for this we are not invoking the full strength of Clifford's Theorem.)  Since $s<n$ we must then, due to minimality of $n$, have that $\G_{1,1}$ is monomializable (since $\G_{1,1}$ can be identified with a sub-quotient of $\G$ and therefore all its nonabelian subqotiens contain noncentral abelian normal subgroups).
But then $\G$ must be monomializable as well.
\end{proof}
A small generalization of the above yields the following.  We apologize for duplicating much of the proof.
\begin{theorem}\label{alginvariant}
Let $\G\subseteq\M_n(\CC)$ be a completely-reducible group of invertible matrices such that every irreducible subrepresentation of every subgroup is monomializable.  Then every $\G$-invariant commutative semisimple algebra $\A$ embeds into a $\G$-invariant maximal commutative semisimple algebra $\B$.
\end{theorem}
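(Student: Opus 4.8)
The plan is to mimic the proof of Theorem~\ref{Mgroup}: argue by contradiction, letting $(\G,\A)$ be a counterexample acting on $\CC^n$ with $n$ as small as possible, and strip away structure until Clifford's theorem applies. Note that the hypothesis on $\G$ passes to every subquotient of $\G$ (in particular to $\G$ restricted to any $\G$-invariant subspace and to the block groups arising below), so it remains available for the smaller groups encountered in the induction.

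I would begin with two reductions. The unit $e$ of $\A$ is fixed by conjugation by $\G$ (being the identity of the $\G$-invariant algebra $\A$), so $e\CC^n$ and $(1-e)\CC^n$ are $\G$-invariant; on $(1-e)\CC^n$ the algebra $\A$ acts as $0$ and a $\G$-invariant maximal commutative semisimple algebra exists because, by hypothesis, each irreducible $\G$-constituent of $(1-e)\CC^n$ is monomializable, while on $e\CC^n$ minimality of $n$ applies if $e\ne 1$; recombining, we may assume $e=1$. Write $\A=\CC p_1\oplus\cdots\oplus\CC p_k$ with the $p_i$ the minimal idempotents of $\A$, so $\CC^n=W_1\oplus\cdots\oplus W_k$ with $W_i=p_i\CC^n\ne 0$. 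As $\G$ acts on $\A$ by algebra automorphisms, it permutes $\{p_1,\dots,p_k\}$, hence $\{W_1,\dots,W_k\}$. If that permutation action is not transitive, grouping the $W_i$ into $\G$-orbits gives a $\G$-invariant decomposition $\CC^n=\V_1\oplus\cdots\oplus\V_m$ with $m\ge 2$ which $\A$ respects; applying minimality to each $(\G|_{\V_j},\A|_{\V_j})$ and taking the direct sum of the resulting algebras contradicts the choice of $(\G,\A)$. So we may assume $\G$ is transitive on $W_1,\dots,W_k$. If moreover $k=1$, then $\A=\CC 1$ and it suffices to monomialize $\G$ itself; but each irreducible $\G$-constituent of $\CC^n$ is monomializable by hypothesis, so $\G$ is monomializable and the diagonal algebra in the resulting basis is the required $\B$. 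Hence we are reduced to the case $k\ge 2$ with $\G$ transitive on the $k$ eigenspaces of $\A$.

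Now $\{W_1,\dots,W_k\}$ is a system of imprimitivity for $\G$, and I would apply the coordinate form of Clifford's theorem from \cite[\S2.2]{MR1}, exactly as in the proof of Theorem~\ref{Mgroup}: after a simultaneous similarity, $n=ks$ with $k>1$, the group $\G$ is $k\times k$ block-monomial with all blocks $s\times s$, the nonzero blocks form a single fixed group $\G_{1,1}\subseteq\M_s(\CC)$, and — since the minimal idempotents $p_i$ of $\A$ are now the coordinate projections onto the diagonal blocks — the algebra $\A$ is exactly the block-scalar algebra $\{\diag{c_1I_s,\ldots,c_kI_s}\colon c_i\in\CC\}$. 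The group $\G_{1,1}$ is a subquotient of $\G$ (the image of the stabilizer of $W_1$ acting on $W_1$); it is completely reducible, because $W_1$ is a direct summand of the restriction of $\CC^n$ to that stabilizer and restriction to a finite-index subgroup preserves complete reducibility, and it inherits the standing hypothesis. Since $s<n$, minimality applied to the (trivially $\G_{1,1}$-invariant) commutative semisimple algebra $\CC I_s\subseteq\M_s(\CC)$ shows that $\G_{1,1}$ is monomializable. Carrying out such a monomialization of $\G_{1,1}$ coherently in all $k$ blocks leaves $\A$ block-scalar, hence diagonal, and makes $\G$ a group of weighted permutations; the algebra of all diagonal matrices in the resulting basis is then a $\G$-invariant maximal commutative semisimple algebra containing $\A$, the desired contradiction.

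The step I expect to be the main obstacle is the coherence in the last paragraph: that a monomialization of the block-group $\G_{1,1}$ can be performed simultaneously in every block so that all of $\G$, not merely its diagonal blocks, becomes monomial. This is the standard Clifford-theoretic fact that a block-monomial group with monomializable blocks is itself monomializable (transitivity of induction of representations), used implicitly in the proof of Theorem~\ref{Mgroup}; spelling it out cleanly in matrix coordinates, together with confirming that complete reducibility and the standing hypothesis survive each passage to a subquotient (the former via the finite-index restriction fact above), is where the real work lies.
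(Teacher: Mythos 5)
Your proposal is correct, and its backbone — block-monomial (Clifford-type) coordinates relative to the eigenspace decomposition of $\A$, followed by a simultaneous change of basis $\diag{T,\ldots,T}$ that monomializes the common block group — is the same as the paper's. The organization, however, differs in a way worth noting. The paper avoids induction entirely: it forms the group $\H=\langle\G,\N\rangle$, where $\N$ is the set of invertible elements of $\A$, reduces to $\H$ irreducible, and applies Clifford's theorem to the noncentral abelian \emph{normal} subgroup $\N$ of $\H$. This device buys two things for free: the isotypic components of $\N$ are exactly your $W_i$ (so the decompositions coincide), and the block group $\H_{1,1}$ is \emph{irreducible}, whence $\G_{1,1}$ is irreducible (since $\H_{1,1}\subseteq\N_{1,1}\G_{1,1}\subseteq\CC\,\G_{1,1}$) and the standing hypothesis applies to it directly as an irreducible subrepresentation of the stabilizer subgroup $\G_{\W}$ — no minimal counterexample needed. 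Your route replaces this with a minimal-$n$ induction and must therefore separately establish that $\G_{1,1}$ is completely reducible (via the finite-index restriction fact, which is true over $\CC$ but is itself a nontrivial lemma) and that it inherits the hypothesis; your extra reductions (the unit $e$, non-transitive orbits) are subsumed in the paper by the single reduction to $\H$ irreducible. The step you flag as the main obstacle — coherent monomialization across all blocks — is resolved exactly as you anticipate, because the coordinate Clifford form makes every nonzero block lie in the single group $\G_{1,1}$, so conjugating by the block-diagonal matrix with the same monomializing $T$ in each block turns all of $\G$ into weighted permutations while keeping $\A$ (block-scalar) diagonal. Both arguments are valid; the paper's is shorter because the $\langle\G,\N\rangle$ trick lets the hypothesis be applied once, directly, in place of your induction.
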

\begin{proof}
Let $\N$ be the set of invertible matrices in $\A$ and let $\H=\langle \G,\N\rangle$ be the group generated by $\G$ and $\N$.  Note that $\H$ is completely reducible, that $\N$ is a normal subgroup of $\H$, and that $\H/\N$ is isomorphic to $\G/(\G\cap\N)$.  With no loss of generality we now assume that $\H$ is irreducible.  If the action (by conjugation) of $\G$ on $\N$ is trivial (i.e., all elements of $\N$ commute with all elements of $\G$), then we are done.  By the coordinate version of Clifford's Theorem discussed in the proof of Theorem \ref{Mgroup}
we can assume that $n=rs$ with $r>1$ and that, up to simultaneous similarity, we have the following:
\begin{enumerate}
\item $\H$ is $r\times r$ block-monomial with all blocks of size $s\times s$.
\item The set $\H_{i,j}$ of nonzero elements in each block $(i,j)$
of $\H$ is individually equal to a fixed irreducible group $\H_{1,1}\subseteq\M_{s}(\CC)$. 
\item $\N$ is block diagonal and each diagonal block of $\N$ is scalar.
\end{enumerate}
Note that $\H_{1,1}\subseteq \N_{1,1} \G_{1,1}\subseteq\CC \G_{1,1}$ and hence $\G_{1,1}$ is irreducible.  
Note that $\G_{1,1}$ can be viewed as an irreducible subrepresentation of $\G_{\W}=\{G\in\G: G(\W)\subseteq \W\}$ where $\W=\span\{e_1,\ldots, e_s\}$ and is therefore monomializable.  Assume with no loss of generality that $\G_{1,1}$  consists of weighted permutations.  Then in this basis $\N$ is diagonal and the entire $\G$ consists of weighted permutations.  Now let $\B$ be the algebra of all diagonal matrices (in this basis) and note that $\A\subseteq \B$ and that $\B$ is $\G$-invariant.
\end{proof}

\begin{corollary}\label{uniinvariant}
Let $\G\subseteq\M_n(\CC)$ be a group of unitary matrices such that every irreducible subrepresentation of every subgroup is monomializable.  Then every $\G$-invariant abelian self-adjoint algebra $\A$ embeds into a $\G$-invariant maximal commutative semisimple algebra $\B$.
\end{corollary}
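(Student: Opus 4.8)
The plan is to deduce this directly from Theorem~\ref{alginvariant}: the hypothesis on monomializability of irreducible subrepresentations of subgroups is carried over word for word, so it suffices to check the two remaining hypotheses of that theorem, namely that $\G$ is completely reducible and that $\A$ is semisimple.

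First I would record that a group $\G$ of unitary matrices is completely reducible. The argument is the usual one: if $\V\subseteq\CC^n$ is invariant under every element of $\G$, then $G\V=\V$ for each $G\in\G$ because $\G$ is a group, and unitarity then gives $G\V^\perp=(G\V)^\perp=\V^\perp$, so $\V^\perp$ is invariant as well; peeling off orthogonal invariant complements and inducting on $\dim\CC^n$ writes $\CC^n$ as an orthogonal direct sum of subspaces on which $\G$ acts irreducibly.

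Next I would observe that an abelian self-adjoint subalgebra $\A\subseteq\M_n(\CC)$ is commutative semisimple: its elements are pairwise commuting normal matrices, hence simultaneously unitarily diagonalizable by the spectral theorem, so $\A$ is similar (indeed unitarily similar) to a subalgebra of the algebra of diagonal matrices; the diagonal algebra has no nonzero nilpotents, so neither does $\A$, which is therefore semisimple. Now Theorem~\ref{alginvariant}, applied to the completely reducible $\G$ and the $\G$-invariant commutative semisimple algebra $\A$, yields a $\G$-invariant maximal commutative semisimple algebra $\B\supseteq\A$, as required.

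There is no real obstacle in this argument — both verifications are routine — so the only thing worth flagging is what is \emph{not} claimed: the algebra $\B$ produced this way is maximal commutative semisimple but need not be self-adjoint in the ambient inner product. If one wanted the sharper statement that $\A$ embeds into a $\G$-invariant masa, the extra work would be to diagonalize $\A$ unitarily first, note that $\G$ then permutes the joint eigenspaces of $\A$, and run the construction inside the unitary stabilizer of each eigenspace (which again satisfies the monomializability hypothesis), choosing the orthonormal diagonalizing bases compatibly with the $\G$-action; but that refinement is not needed for the corollary as stated.
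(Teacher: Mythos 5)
Your proof is correct and matches the paper's intent exactly: the paper gives no argument for this corollary, treating it as an immediate consequence of Theorem~\ref{alginvariant}, and you supply precisely the two routine verifications (complete reducibility of a unitary group via orthogonal complements, and semisimplicity of an abelian self-adjoint algebra via simultaneous unitary diagonalization) needed to make that deduction explicit. Your closing remark correctly identifies that the masa refinement is deferred to the subsequent corollaries rather than claimed here.
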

\qed

\begin{corollary}
Let $\G\subseteq\M_n(\CC)$ be a completely reducible group and let $\A\subseteq\M_n(\CC)$ be a $\G$-invariant commutative semi-simple algebra. If every non-abelian subquotient of $\G$ contains a non-central abelian normal subgroup, then $\A$ is contained in a $\G$-invariant maximal commutative semisimple algebra $\B$.  If in addition we assume that $\G$ is unitary and $\A$ is self adjoint, then we can also assume that $\B$ is a masa.
\end{corollary}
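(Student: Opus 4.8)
The plan is to reduce everything to results already in hand and then do one extra unitary refinement. The hypothesis that every non-abelian sub-quotient of $\G$ contains a non-central abelian normal subgroup is exactly the hypothesis of Theorem~\ref{Mgroup}, so that theorem tells us that every irreducible representation of every subgroup of $\G$ is monomializable; in particular every irreducible subrepresentation of every subgroup of $\G$ is monomializable, which is the standing hypothesis of Theorem~\ref{alginvariant} and Corollary~\ref{uniinvariant}. Hence the first assertion is immediate: $\G$ is a completely reducible group of invertible matrices with the monomializability property, so Theorem~\ref{alginvariant} embeds $\A$ into a $\G$-invariant maximal commutative semisimple $\B$, and Corollary~\ref{uniinvariant} does the same in the self-adjoint case. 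The only genuinely remaining point is to upgrade $\B$ to a masa when $\G$ is unitary and $\A$ is self-adjoint.

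For that I would re-run the proof of Theorem~\ref{alginvariant}, carried out unitarily throughout. That proof leaves the unitary world only in its choice of $\N=$ invertible elements of $\A$: since $\A$ is merely self-adjoint, $\N$ has non-unitary elements and $\langle\G,\N\rangle$ is not unitary. I would fix this by using instead $\N':=\{\text{unitary elements of }\A\}$. As $\A$ is a self-adjoint abelian subalgebra of $\M_n(\CC)$ it is spanned by its minimal projections, and a single unitary $\sum_j\omega_jq_j\in\A$ with the $\omega_j$ distinct generates all the $q_j$ by Lagrange interpolation, so $\N'$ still generates $\A$ as an algebra. Moreover $\N'$ is a group of unitaries, it is normal in $\H':=\langle\G,\N'\rangle$ (conjugation by $\G$ keeps its elements unitary and inside $\A$; conjugation by $\N'$ is trivial since $\A$ is abelian), and $\H'$ is a group of unitaries, hence completely reducible with all invariant-subspace complements orthogonal. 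So, exactly as in Theorem~\ref{alginvariant}, I may assume $\H'$ irreducible, treating the $\H'$-irreducible (hence $\G$-invariant) orthogonal summands of $\CC^n$ separately and recombining the resulting masas orthogonally. If the conjugation action of $\G$ on $\N'$ is trivial, then Schur's lemma forces $\N'\subseteq\CC I$, so $\A\subseteq\CC I$ and $\G$ itself is an irreducible group of unitaries, monomializable by our application of Theorem~\ref{Mgroup}. Otherwise the coordinate form of Clifford's theorem applied to the \emph{unitary} group $\H'$ (the $\N'$-isotypic subspaces being mutually orthogonal, the block-monomial normal form can be reached by a unitary similarity) gives $n=rs$ with $r>1$, $\H'$ in $r\times r$ block-monomial form with $s\times s$ blocks, $\N'$ block-diagonal with scalar blocks, and $\G_{1,1}$ an irreducible subrepresentation of a subgroup of $\G$ — hence monomializable — which is moreover a group of unitaries on $\W=\span\{e_1,\dots,e_s\}$.

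Both cases are closed by the following lemma, whose proof is the main thing to check: \emph{an irreducible group of unitary matrices that is monomializable is unitarily monomializable.} Indeed, monomializability gives a direct sum $\CC^m=L_1\oplus\dots\oplus L_m$ into lines permuted by the group; the operator $\sum_i\pi_{L_i}$, where $\pi_{L_i}$ is the \emph{orthogonal} projection onto $L_i$, commutes with the (irreducible) group, hence equals $I$ by Schur, and a sum of rank-one orthogonal projections can equal $I$ only when they are pairwise orthogonal; so the $L_i$ are pairwise orthogonal and any orthonormal basis picked one vector from each $L_i$ exhibits the group as weighted permutations. Applying this to $\G$ in the first case, or to $\G_{1,1}$ in the second (then transporting the monomializing unitary to the other diagonal blocks via the block-monomial structure, exactly as in Theorem~\ref{alginvariant}, which makes $\N'$ diagonal and all of $\G$ a set of weighted permutations), we land in an orthonormal basis in which $\G$ consists of weighted permutations and $\N'$ — hence $\A$ — is diagonal. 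Then $\B:=\{\text{matrices diagonal in this basis}\}$ is a masa, it is $\G$-invariant because conjugating a diagonal matrix by a weighted permutation returns a diagonal matrix, and $\A\subseteq\B$.

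I expect the main obstacle to be bookkeeping rather than a new idea: making sure that \emph{every} similarity used — the reduction to the irreducible case, Clifford's theorem, and the monomialization of $\G_{1,1}$ — can be taken unitary, so that ``diagonal in this basis'' really is a self-adjoint algebra. The only ingredient beyond the already-established results is the lemma above, and its proof via the averaged-projection/Schur argument is short; the rest is a careful rereading of the proof of Theorem~\ref{alginvariant} with the inner product kept in view.
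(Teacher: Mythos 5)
Your proposal is correct, and for the first assertion it is exactly the route the paper intends: the paper gives this corollary no proof at all, reading it off as Theorem \ref{Mgroup} (the subquotient hypothesis implies every irreducible representation of every subgroup is monomializable) followed by Theorem \ref{alginvariant}, with Corollary \ref{uniinvariant} covering the self-adjoint case. Where you genuinely go beyond the paper is the masa claim. As literally stated, Corollary \ref{uniinvariant} only delivers a $\G$-invariant \emph{maximal commutative semisimple} algebra, and the $\B$ produced by the proof of Theorem \ref{alginvariant} has no a priori reason to be self-adjoint, since the similarities used there (passage to the Clifford normal form and the monomialization of $\G_{1,1}$) are not asserted to be unitary. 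You correctly identify this and repair it: replacing the invertible elements of $\A$ by its unitary elements keeps $\H'=\langle\G,\N'\rangle$ unitary, the $\N'$-isotypic decomposition is then orthogonal, and your lemma --- an irreducible \emph{unitary} monomializable group is \emph{unitarily} monomializable, via $\sum_i\pi_{L_i}=I$ by Schur's lemma together with the observation that rank-one orthogonal projections summing to $I$ must be pairwise orthogonal --- is precisely the missing ingredient, and your proof of it is correct. Two small points to tidy up: (i) the claim that the unitary elements of $\A$ generate $\A$ requires $\A$ to be unital (a nonunital self-adjoint abelian algebra contains no unitaries at all), so one should first replace $\A$ by $\A+\CC I$, which is still abelian, self-adjoint and $\G$-invariant and is harmless since any maximal $\B$ contains $I$; the paper's own proof of Theorem \ref{alginvariant} needs the same silent reduction. (ii) In the Clifford step one should also note that the normalization making every nonzero block of $\H'$ equal to the fixed group $\H'_{1,1}$ is effected by conjugating with blocks of elements of the unitary group $\H'$, hence is itself unitary. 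Neither point affects the substance of your argument.
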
\qed

\begin{corollary}\label{cor5} Let $U$ be a unitary matrix and let $\A$ be a $U$-invariant abelian self-adjoint algebra.  Then $\A$ is contained in a $U$-invariant masa $\B$.
\end{corollary}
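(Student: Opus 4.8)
The plan is to derive Corollary~\ref{cor5} as the special case $\G=\langle U\rangle$ of the preceding corollaries, so the first step is to check that the cyclic group $\G=\langle U\rangle$ satisfies their hypotheses. Being generated by a single unitary matrix, $\G$ is a group of unitary matrices, hence completely reducible, since the orthogonal complement of a $\G$-invariant subspace of $\CC^n$ is again $\G$-invariant. Being cyclic, $\G$ is abelian, so it has no non-abelian subquotients at all and the subquotient hypothesis is satisfied vacuously; equivalently, every subgroup of $\G$ is abelian, every irreducible complex representation of an abelian group is one-dimensional, and a $1\times 1$ matrix is trivially a weighted permutation, so every irreducible subrepresentation of every subgroup of $\G$ is monomializable. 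Since a commutative self-adjoint subalgebra of $\M_n(\CC)$ is automatically semisimple, $\A$ is a $\G$-invariant commutative semisimple self-adjoint algebra. The corollary above whose ``in addition'' clause covers the case of unitary $\G$ and self-adjoint $\A$ then yields a $\G$-invariant masa $\B\supseteq\A$, and since a subalgebra is invariant under conjugation by the cyclic group $\langle U\rangle$ if and only if it is invariant under conjugation by $U$ (using that $\A$, $\B$ are finite-dimensional, so that the inclusion $U^{-1}\A U\subseteq\A$ upgrades to an equality), this is exactly the desired statement.

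Since $\langle U\rangle$ is abelian, the Clifford-theoretic machinery inside Theorem~\ref{alginvariant} collapses, and I would also like to record a self-contained argument. We may assume $\A$ is unital, replacing it by $\A+\CC I$ if necessary. Let $p_1,\dots,p_k$ be the minimal projections of $\A$ with ranges $E_1,\dots,E_k$, so that $\CC^n=E_1\oplus\cdots\oplus E_k$ orthogonally. Conjugation by $U$ is a $*$-automorphism of $\A$, hence permutes $\{p_1,\dots,p_k\}$, say $U^{-1}p_iU=p_{\sigma(i)}$; equivalently $U$ maps each $E_{\sigma(i)}$ unitarily onto $E_i$, so $U^{-1}$ permutes the subspaces $E_i$ according to $\sigma$. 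On a fixed representative $E_{i_0}$ of a $\sigma$-orbit of length $m$, the power $U^m$ restricts to a unitary operator $W$ of $E_{i_0}$; I would choose an orthonormal eigenbasis of $W$, transport it around the orbit by successive applications of $U^{-1}$, and observe that the consistency condition after a full cycle is precisely that this basis be $W^{-1}$-invariant up to scalars, which holds. Doing this on every orbit produces an orthonormal basis of $\CC^n$ that $U$ permutes up to scalars and that refines the decomposition $\CC^n=\bigoplus E_i$; the algebra $\B$ of all matrices diagonal in this basis is then a masa which contains every $p_i$, hence contains $\A$, and which is $U$-invariant.

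I do not expect a real obstacle: once Theorem~\ref{alginvariant} and its corollaries are available, Corollary~\ref{cor5} is a direct specialization. The only step that is not purely formal --- and it is the same step in both routes above --- is the passage from a maximal commutative semisimple algebra to an honest masa, i.e.\ arranging the enlargement to be self-adjoint; this is exactly where unitarity of $U$ (through the unitary $W=U^m|_{E_{i_0}}$ and the orthogonality of the spectral decomposition of $\A$) enters, and everything else is bookkeeping.
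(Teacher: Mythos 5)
Your first route is exactly the paper's (unwritten) proof: the paper disposes of Corollary~\ref{cor5} with a bare \(\qed\) as the specialization \(\G=\langle U\rangle\) of the preceding corollary, and your verification that a cyclic unitary group is completely reducible with vacuously satisfied subquotient hypotheses is precisely the bookkeeping being left to the reader. Your supplementary self-contained argument (transporting an eigenbasis of \(U^m|_{E_{i_0}}\) around each orbit of minimal projections) is also correct and is a nice concrete unwinding of what Theorem~\ref{alginvariant} does in this abelian case, but it is not needed for the specialization.
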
\qed

\section{Half-normal operators and weighted permutations}

In this section we investigate the structure of matrices that are unitarily equivalent to weighted permutations.  The concept of half-normality is an important ingredient as weighted permutations are clearly such.

\begin{remark} The analogous question with similarity in lieu of unitary equivalence is fairly trivial.  Indeed, it is easy to see that a an invertible matrix is similar to a weighted permutation if and only it is diagonalizable.  For a general matrix $A$ we then get that $A$ is similar to a weighted permutation if and only if it is similar to a block diagonal matrix 
$$\begin{pmatrix} D & 0\\ 0 & N\end{pmatrix}, 
$$
where $D$ is diagonal and $N$ is nilpotent, or, equivalently, if all Jordan blocks of $A$ corresponding to nonzero eigenvalues are of size one.
\end{remark}

The following lemma describes half-normality in terms of polar decompositions.
\begin{lemma} Let $A=PU$ where $P$ is positive and $U$ is unitary.  Then the following are equivalent.
\begin{enumerate}
\item $A$ is half-normal
\item $P$ commutes with $U^* P U$
\item $P$ commutes with $UPU^*$.
\end{enumerate}
\end{lemma}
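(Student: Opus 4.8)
The plan is to reduce all three conditions to a single commutation relation between two positive matrices, and then to invoke the elementary fact that positive semidefinite matrices commute if and only if their squares do. Note that we need not assume $PU$ is \emph{the} polar decomposition (or that it is unique); the argument uses only that $P\geq 0$ and $U$ is unitary.

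First I would compute the two relevant positive matrices directly from $A=PU$. Since $P^*=P$ and $U^*U=UU^*=I$, we get
$$A^*A=U^*P^2U=(U^*PU)^2,\qquad AA^*=P^2.$$
Hence $A$ is half-normal precisely when $P^2$ commutes with $(U^*PU)^2$, i.e. when the squares of the two positive matrices $P$ and $U^*PU$ commute.

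The key step is then the observation that for positive semidefinite matrices $S,T$ one has $ST=TS$ if and only if $S^2T^2=T^2S^2$. One direction is trivial. For the other, $S=f(S^2)$ where $f$ is the square-root function, which on the compact nonnegative set $\sigma(S^2)$ is a uniform limit of polynomials; thus $S$ is a limit of polynomials in $S^2$, so anything commuting with $S^2$ commutes with $S$. Applying this once with $S^2$ and $T^2$, then again with the roles of $S$ and $T$ interchanged, gives the claim. Taking $S=P$, $T=U^*PU$ (so $S^2=P^2$, $T^2=U^*P^2U$) shows that (1) $\iff$ (2).

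Finally, (2) $\iff$ (3) follows because commutation is preserved under conjugation by the fixed unitary $U$: conjugating $P\,(U^*PU)=(U^*PU)\,P$ by $U$ and using $U(U^*PU)U^*=P$ turns it into $(UPU^*)P=P(UPU^*)$, and conjugating back by $U^*$ reverses the implication. I expect no real obstacle here; the only point needing any care is the positive-matrices-commute-iff-squares-commute lemma, and that is routine via the polynomial-approximable functional calculus for the square root.
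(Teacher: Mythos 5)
Your proposal is correct and follows essentially the same route as the paper: compute $AA^*=P^2$ and $A^*A=U^*P^2U$, reduce to the fact that positive matrices commute iff their squares do (the paper phrases this as $P$ being a polynomial in $P^2$, which in finite dimensions is exact by interpolation on the spectrum, rather than a uniform limit), and get (2)$\iff$(3) by conjugating with $U$. No gaps.
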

\begin{proof}
The result immediately follows from the following (well-known) observations:
\begin{itemize}
\item Since $P$ is positive we have that $P$ is a polynomial in $P^2$ and hence $P$ commutes with $U^* P U$ if and only if $P^2$ commutes with $U^* P^2 U$.
\item $AA^*=P^2$, $A^* A = U^* P^2 U$.
\end{itemize}
\end{proof}

\begin{lemma}  Let $A$ be a half-normal operator with 
pairwise distinct singular values.   Then $A$ is unitarily 
equivalent to a weighted permutation.   Moreover, if $A=PU$ is a polar decomposition into the product of
positive $P$ and unitary $U$, then for every $k\in\mathbb{N}$ the matrix $P$ commutes with $(U^*)^k P U^k$.
\end{lemma}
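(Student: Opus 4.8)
The plan is to use the pairwise‑distinctness of the singular values to show that the positive part $P$ already generates a maximal abelian self‑adjoint algebra, and then feed this into Corollary~\ref{cor5}. Fix a polar decomposition $A=PU$ with $P$ positive and $U$ unitary. By the preceding lemma, half‑normality of $A$ is equivalent to $P$ commuting with $U^*PU$. The singular values of $A$ are the eigenvalues of $P$, and by hypothesis they are pairwise distinct; consequently the commutant of $P$ in $\Mnc$ equals the unital algebra $\A$ of polynomials in $P$, which is abelian and self‑adjoint (indeed, being of dimension $n$, it is a masa). Since $P$ commutes with $U^*PU$ we get $U^*PU\in\A$, and because conjugation by $U$ sends $p(P)$ to $p(U^*PU)$, the algebra $\A$ is $U$-invariant.

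By Corollary~\ref{cor5}, $\A$ is contained in a $U$-invariant masa $\B$ (one may in fact take $\B=\A$ here, but this is immaterial). Choose a unitary $W$ with $W^*\B W=\D$, the algebra of all diagonal matrices. Then $W^*PW$ is diagonal, while the $U$-invariance of $\B$ says that $V:=W^*UW$ satisfies $V^*\D V\subseteq\D$, i.e.\ $V$ normalizes $\D$; a unitary normalizing the diagonal algebra must send each standard basis vector to a unimodular scalar multiple of a standard basis vector, so $V$ is a weighted permutation. Hence $W^*AW=(W^*PW)(W^*UW)$ is a diagonal matrix times a weighted permutation, and therefore itself a weighted permutation; this proves the first assertion. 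For the second, note that $P\in\A\subseteq\B$ and that iterating $U^*\B U\subseteq\B$ gives $(U^*)^k\B U^k\subseteq\B$ for every $k$; thus $P$ and $(U^*)^kPU^k$ both lie in the abelian algebra $\B$ and so commute. (Equivalently, in the $W$-coordinates $(U^*)^kPU^k$ becomes $(V^*)^k(W^*PW)V^k$, which is diagonal since conjugation by a power of the weighted permutation $V$ merely permutes and rescales the diagonal entries.)

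The step that carries the real weight is the claim that the algebra $\A$ generated by $P$ is $U$-invariant: commuting with $P$ only a priori places $U^*PU$ in the commutant of $P$, and it is precisely the distinctness of the singular values that identifies this commutant with the singly generated algebra $\A$ and thereby forces $U^*\A U\subseteq\A$. The remaining ingredients — the description of unitaries normalizing a diagonal masa, and the closure of weighted permutations under left multiplication by diagonal matrices — are routine.
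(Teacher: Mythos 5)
Your proof is correct and follows essentially the same route as the paper: diagonalize $P$ and use the distinctness of its eigenvalues to force $U$ to normalize the diagonal masa, hence to be a unitary weighted permutation. The appeal to Corollary~\ref{cor5} is harmless but superfluous (as you note, the commutant of $P$ is already a $U$-invariant masa), and your explicit justification that a unitary normalizing the diagonal algebra is a weighted permutation simply spells out what the paper leaves implicit.
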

\begin{proof}
Let $A=PU$ be a polar decomposition.  Up to unitary equivalence, we can assume that $P$ is diagonal.  Since the diagonal entries of $P$ are pairwise distinct, we have that $U$ is a unitary weighted permutation and hence $A$ is a weighted permutation. Also, for every $k\in\mathbb{N}$, $U^k$ is again a unitary weighted permutation and hence $(U^*)^k P U^k$ is also diagonal and thus commutes with  $P$.
\end{proof}

\begin{theorem}
Let $A\in\Mnc$ be a half-normal operator.  Then the following are equivalent.
\begin{enumerate}
\item $A$ is unitarily equivalent to a weighted permutation.
\item $A$ is a limit of half-normal operators $(A_m)_{m=1}^\infty$ that have pairwise distinct singular 
values.
\item $A$ has a polar decomposition\footnote{Note that we are not assuming that $A$ is invertible and hence a polar decomposition is not unique.} $A=PU$ into a product of positive $P$ and unitary $U$ such that
for every $k\in\mathbb{N}$, the operator $(U^*)^k P U^k$ commutes with $P$.
\end{enumerate}
\end{theorem}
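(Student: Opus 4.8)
The plan is to prove the cycle $(1)\Rightarrow(2)\Rightarrow(3)\Rightarrow(1)$; the first two implications are soft, and the content is in $(3)\Rightarrow(1)$, where Corollary~\ref{cor5} is exactly the tool needed. For $(1)\Rightarrow(2)$ I would fix a unitary $V$ with $V^*AV=DP$, $D$ diagonal and $P$ a permutation matrix; since $(DP)(DP)^*=DD^*$, the singular values of $DP$ are the moduli of the diagonal entries of $D$, so perturbing $D$ to diagonal matrices $D_m\to D$ whose diagonal entries have pairwise distinct moduli and setting $A_m:=V(D_mP)V^*$ produces matrices that are unitarily equivalent to weighted permutations --- hence half-normal --- with pairwise distinct singular values and $A_m\to A$. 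For $(2)\Rightarrow(3)$, given $A_m\to A$ with each $A_m$ half-normal with distinct singular values, I would use the preceding lemma to pick polar decompositions $A_m=P_mU_m$ for which $P_m$ commutes with $(U_m^*)^kP_mU_m^k$ for every $k$; since the unitaries $U_m$ lie in a compact set and $\|P_m\|=\|A_m\|$ stays bounded, a subsequence gives $U_m\to U$ unitary and $P_m\to P$ positive, whence $A=PU$ is a polar decomposition and, for each fixed $k$, the commutation relation survives the limit.

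The heart of the matter is $(3)\Rightarrow(1)$. Given a decomposition $A=PU$ as in $(3)$, I would introduce the unital $*$-algebra $\A$ generated by the self-adjoint matrices $P_k:=(U^*)^kPU^k$, $k\ge 0$, and first verify that $\A$ is abelian: for $0\le i\le j$, applying the inner automorphism $X\mapsto(U^*)^iXU^i$ to the hypothesis $PP_{j-i}=P_{j-i}P$ yields $P_iP_j=P_jP_i$, so all the generators commute. Hence $\A$ is an abelian self-adjoint algebra, and since $U^*P_kU=P_{k+1}$ it is $U$-invariant; by Corollary~\ref{cor5} it embeds into a $U$-invariant masa $\B$. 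Choosing a unitary $W$ with $W^*\B W=\D$, the algebra of all diagonal matrices, I get that $W^*PW$ is diagonal, while $U^*\B U\subseteq\B$ forces $(W^*UW)^*\D(W^*UW)\subseteq\D$, so $W^*UW$ normalizes $\D$; a unitary normalizing $\D$ carries each rank-one coordinate projection $e_ie_i^*$ to another one and is therefore a weighted permutation. Thus $W^*AW=(W^*PW)(W^*UW)$ is a diagonal matrix times a weighted permutation, hence itself a weighted permutation, and $A$ is unitarily equivalent to one.

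I expect the one genuine (if mild) obstacle to be conceptual: one has to think to form the algebra generated by $\{(U^*)^kPU^k\}$ and to notice that hypothesis $(3)$ --- which only asserts that each of these commutes with $P$ --- already forces them to commute pairwise, and this is exactly what makes $\A$ eligible for Corollary~\ref{cor5}. The remaining ingredients, that every masa in $\Mnc$ is unitarily equivalent to the algebra of diagonal matrices and that a unitary normalizing that algebra is a weighted permutation, are standard.
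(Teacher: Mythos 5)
Your proof is correct, and while it leans on the same two key ideas as the paper --- the lemma on half-normal matrices with distinct singular values, and Corollary~\ref{cor5} applied to the abelian $U$-invariant algebra generated by the conjugates $(U^*)^kPU^k$ --- you organize the equivalences differently and, in one place, more efficiently. The paper proves $(2)\Rightarrow(1)$ directly by a compactness argument (extracting a fixed permutation matrix $S$ and a convergent sequence of diagonalizing unitaries $V_m$), and then routes $(3)\Rightarrow(1)$ back through $(2)$: after embedding into the $U$-invariant masa $\D$, it approximates $P$ by elements $P_m\in\D$ with pairwise distinct eigenvalues, forms $A_m=P_mU$, and invokes the already-established equivalence $(1)\Leftrightarrow(2)$. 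You instead close the cycle $(1)\Rightarrow(2)\Rightarrow(3)\Rightarrow(1)$: your $(2)\Rightarrow(3)$ is a clean compactness-and-continuity argument on the polar factors, and your $(3)\Rightarrow(1)$ finishes \emph{directly} --- once $\B$ is a $U$-invariant masa and $W$ diagonalizes it, $W^*PW$ is diagonal and $W^*UW$ normalizes the diagonal algebra, hence is a unitary weighted permutation, so $W^*AW$ is a weighted permutation with no limiting argument needed. This is a genuine (if modest) simplification of the paper's endgame; the only small price is that you must supply the standard facts that $U^*\B U\subseteq\B$ forces equality by dimension count and that a unitary normalizing the diagonal masa permutes the minimal projections $e_ie_i^*$ and is therefore monomial, both of which you correctly note. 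Your verification that hypothesis $(3)$ already forces the $P_k$ to commute pairwise (by conjugating the relation $PP_{j-i}=P_{j-i}P$ by $U^i$) is also spelled out more explicitly than in the paper, where it is asserted without comment.
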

\begin{proof}
(1)$\implies$(2): Immediate by writing $A$ as a weighted permutation and suitably perturbing the weights.

(2)$\implies$(1):  Assume that $A$ is the limit of a sequence of half-normal operators $(A_m)_m$ each having pairwise distinct singular values.  By the lemma above there exist unitary matrices $V_m$ such that $V_m^* A_m V_m = D_mS_m$, where $D_m$ is diagonal and $S_m$ is a permutation matrix.  By moving to subsequences if necessary, we assume that for every $m$, $S_m$ is a fixed permutation matrix $S$ and that the sequence $(V_m)_m$ converges to a unitary matrix $V$.
It then immediately follows that the sequence $(D_m)_m$ converges to the matrix $D=V^* A V S^*$, which must automatically be diagonal.  Hence $V^* A V = DS$ is a weighted permutation.


(1) $\implies$ (3): Assume that $A$ is a weighted permutation and write it as a product of positive diagonal matrix  $P$ and a unitary weighted permutation $U$.  It is then obvious that for every $n\in\mathbb{N}$, $(U^*)^n P U^n$ is also a diagonal matrix and hence commutes with $P$.

(3) $\implies$ (1): Assume that $A=PU$ where $P$ is positive and $U$ is unitary such that for all $k\in\mathbb{N}$, the matrix $(U^*)^k P U^k$ commutes with $P$.  Let $\mathcal{A}$ be the self adjoint algebra generated by $\{ (U^*)^k P U^k : k\in\mathbb{N}\}$.  By the above assumption $\mathcal{A}$ is abelian.  Since $\mathcal{A}$ is invariant under conjugation by $U$, it is contained in a masa $\mathcal{D}$ that is also invariant under conjugation by $U$ by Corollary \ref{cor5}.  It follows that $P$ is a limit of elements $P_m$ of $\mathcal{D}$ that have pairwise distinct eigenvalues.  Hence $A$ is the limit of $A_m=P_m U$.  Note that for every $m$, $A_m$ has pairwise distinct singular values.  Since (1)$\iff$ (2), this means that $A$ is unitarily equivalent to a weighted permutation.
\end{proof}

The next proposition shows that the condition (3) in the theorem above only needs to be checked for $k\le n-1$.  This is sharp as the example following the proposition shows.

\begin{proposition} Let $U\in\Mn(\CC)$ be unitary and let $P\in\Mn(\CC)$ be self-adjoint such that $P$ commutes with $(U^*)^k P U^k$ for all $k=1,\ldots, n-1$. Then $P$ commutes with $(U^*)^k P U^k$ for all $k\in\mathbb{N}$.
\end{proposition}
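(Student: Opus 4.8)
The plan is to extract one linear dependence among finitely many conjugates of $P$ and then propagate it, using only that conjugation by $U$ is an algebra automorphism. Write $P_k := (U^*)^k P U^k$, so $P_0 = P$, $P_{k+1} = U^* P_k U$, and $\sigma \colon X \mapsto U^* X U$ is an algebra automorphism of $\Mnc$ with $\sigma(I) = I$. Applying $\sigma^j$ to the hypothesis $[P_0, P_k] = 0$ $(1 \le k \le n-1)$ gives $[P_j, P_{j+k}] = 0$, so the self-adjoint matrices $P_0, P_1, \dots, P_{n-1}$ pairwise commute. Let $\A$ be the subalgebra of $\Mnc$ generated by $I$ and $P_0, \dots, P_{n-1}$: it is commutative, and as $P_0, \dots, P_{n-1}$ are commuting self-adjoint matrices they are simultaneously unitarily diagonalizable, so every element of $\A$ is diagonal in one fixed orthonormal basis. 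Hence $\dim_\CC \A \le n$.

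The heart of the argument is then a dimension count. The $n+1$ elements $I, P_0, P_1, \dots, P_{n-1}$ of the at-most-$n$-dimensional algebra $\A$ are linearly dependent, and since $I \neq 0$ this dependence involves some $P_k$; letting $m \le n-1$ be the largest index with $P_m$ occurring and solving for it, we obtain $P_m \in \span\{I, P_0, \dots, P_{m-1}\}$. Applying $\sigma^j$ gives the recurrence $P_{m+j} \in \span\{I, P_j, \dots, P_{j+m-1}\}$ for all $j \ge 0$. A short induction on $k$ now shows $P_k \in \span\{I, P_0, \dots, P_{m-1}\} \subseteq \A$ for every $k \ge 0$ (for $k \le m$ this is clear; for $k > m$ the recurrence expresses $P_k$ through $I$ and $P_{k-m}, \dots, P_{k-1}$, all of index $< k$). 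Since $\A$ is commutative, all the $P_k$ commute; in particular $P = P_0$ commutes with $(U^*)^k P U^k = P_k$ for every $k \in \mathbb{N}$.

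I do not anticipate a real obstacle: the only delicate points are that the dependence genuinely involves some $P_k$ (forced by $I \neq 0$) and that the whole argument rests on the first $n+1$ elements $I, P_0, \dots, P_{n-1}$ already spanning a commutative — hence at most $n$-dimensional — algebra, which is exactly what the hypothesis for $k \le n-1$ provides; this is also why $n-1$ is the right threshold, sharpness being shown by the example that follows. Notably, this proof uses neither Corollary~\ref{cor5} nor any Clifford-theoretic input — it is purely a dimension bound for commuting self-adjoint matrices plus the automorphism property of conjugation by $U$.
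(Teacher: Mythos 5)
Your proof is correct and follows essentially the same route as the paper's: bound the dimension of the commutative self-adjoint algebra generated by $I, P_0,\dots,P_{n-1}$ by $n$, extract a linear dependence expressing some $P_m$ in terms of lower-index conjugates, and propagate it by conjugation to all $P_k$. The only differences are cosmetic (you take the largest index in a fixed dependence where the paper takes the minimal dependent initial segment, and you spell out the pairwise commutativity and the induction that the paper leaves implicit).
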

\begin{proof} 
If $P=I$, then the conclusion is vacuously satisfied.  Now assume that $P\not=I$.
For $k\ge 0$ let us abbreviate $P_k=(U^*)^k P U^k$.  
Let $\A$ be the unital algebra generated by the commuting set
$\{P_0,\ldots, P_{n-1}\}$.   Since $\A$ is commutative and self-adjoint it has dimension at most $n$.  Hence the set $I,P_0,\ldots, P_{n-1}$ is linearly dependent.  Let $\ell$ be the smallest integer such that the set $I,P,\ldots,P_{\ell}$ is linearly dependent and note that this means that $P_\ell$ is a linear combination of $I,P_0,\ldots, P_{\ell-1}$.  It is then easy to see that for every $k\ge\ell$ (and hence also for very $k\ge n-1$) we have that $P_k$ is a linear combination of $I,P_0,\ldots, P_{\ell-1}$ and therefore must commute with $P$.
\end{proof}

The following example shows that $n-1$ in the proposition above is best possible.
\begin{example} Let $C$ be the $n\times n$ cycle matrix, i.e., the matrix such that $Ce_1 = e_n$ and for $k=2,\ldots, n$ we have $Ce_k=e_{k-1}$.  Furthermore, let
$$V=\frac{1}{\sqrt{2}}\begin{pmatrix} 1 & -1 \\ 1 & 1\end{pmatrix}
\mbox{, let }
W=\begin{pmatrix} I_{n-2} & 0\\0 & V\end{pmatrix}
\mbox{, and let } U=WC.$$  
Let $P=E_{1,1}$ (where $E_{r,s}$ is 
the matrix unit with $1$ in the $(r,s)$-entry and zeroes everywhere else).  
Then for $k=1,\ldots, n-2$ we have that $(U^*)^k P U^k$ commutes with $P$, but $(U^*)^{n-1} P U^{n-1}$ does not commute with $P$.  Indeed, for $k=1,\ldots, n-2$ we can see inductively that $(U^*)^k P U^k=E_{k+1,k+1}$.  Let
$$
Q=V^* \begin{pmatrix} 1 & 0\\0 & 0\end{pmatrix} V = \frac{1}{2}\begin{pmatrix}
1 & -1 \\-1& 1\end{pmatrix}.$$
Now note that the matrix
$$
(U^*)^{n-1} P U^{n-1}=U^* E_{n-1,n-1} U=C^* \begin{pmatrix} 0 & 0 \\ 0 & Q\end{pmatrix} C$$ cannot commute with $P$ since the matrix
$$
C Q C^*= C^* \begin{pmatrix} 0 & 0 \\ 0 &  Q\end{pmatrix} C
$$
does not commute with $CPC^* = E_{n,n}$.
\end{example}

The following is a modification of the example above.  It shows an invertible  $4\times 4$ matrix $A$ with polar decomposition $A=PU$ such that $P$ commutes with $U^* P U$ (hence $A$ is half-normal), but $P$ does not commute with $(U^*)^2 P U^2$ (and hence $A$ cannot be unitarily equivalent to a weighted permutation).
\begin{example} Let $B=\frac{1}{\sqrt{2}}\left(\begin{array}{cc}
1 & 1 
\\
 -1 & 1 
\end{array}\right)$, $C=\left(\begin{array}{cc}
0 & 1 
\\
 1 & 0 
\end{array}\right)$ and let $U_1, U_2$ be the block diagonal $4\times 4$ matrices $U_1=\left(\begin{array}{cc}
B & 0 
\\
 0 & B 
\end{array}\right), U_2=\left(\begin{array}{ccc}
1 & 0 & 0 \\ 
0 & C & 0 \\
0 & 0 & 1 
\end{array}\right)$.  Now let
$$
P=\left(\begin{array}{cccc}
1 & 0 & 0 & 0 
\\
 0 & 1 & 0 & 0 
\\
 0 & 0 & 2 & 0 
\\
 0 & 0 & 0 & 2 
\end{array}\right)\mbox{ and } U=U_1U_2= \left(\begin{array}{cccc}
\frac{\sqrt{2}}{2} & 0 & \frac{\sqrt{2}}{2} & 0 
\\
 -\frac{\sqrt{2}}{2} & 0 & \frac{\sqrt{2}}{2} & 0 
\\
 0 & \frac{\sqrt{2}}{2} & 0 & \frac{\sqrt{2}}{2} 
\\
 0 & -\frac{\sqrt{2}}{2} & 0 & \frac{\sqrt{2}}{2} 
\end{array}\right).
$$
Then $A=PU$ is half-normal, but is not unitarily equivalent to a weighted permutation.  Indeed, a direct computation shows that $P$ and $UPU^*$ commute, but $P$ and $U^2 P (U^*)^2$ do not as 
$$(U^2 P (U^*)^2) P - P (U^2 P (U^*)^2) = \left(\begin{array}{cccc}
0 & 0 & \frac{1}{2} & 0 
\\
 0 & 0 & 0 & \frac{1}{2} 
\\
 -\frac{1}{2} & 0 & 0 & 0 
\\
 0 & -\frac{1}{2} & 0 & 0 
\end{array}\right).$$
\end{example}

\begin{question} If $A$ is unitarily equivalent to a weighted permutation, then it is clear that each power of $A$ is also equivalent to a weighted permutation and is hence half-normal.  Does the converse hold, i.e., if every power of $A$ is half-normal, must $A$ be unitarily equivalent to a weighted permutation?  What if we additionally assume that $A$ is invertible?
\end{question}


\begin{thebibliography}{00}
\bibitem{Ba} R.~Baer, \textsl{
Supersoluble groups}, Proc. Amer. Math. Soc., 6, (1955), 16--32.
\bibitem{Brall} H.G.~Bray, W.~E.~Deskins, D.~Johnson,  J.~F.~Humphreys, B.~M.~Puttaswamaiah, P.~Venzke, G.~W.~Walls, \textsl{Between nilpotent and solvable. Edited and with a preface by Michael Weinstein},
Polygonal Publ. House, Washington, NJ, 1982. 

\bibitem{MR1}  M.~Mastnak, H.~Radjavi, \textsl{Matrix semigroups whose ring commutators have real spectra are realizable}, Semigroup Forum, 95, (2017), 51--65.

\bibitem{MR2} M.~Mastnak, H.~Radjavi, \textsl{Invariant embeddings and ergodic obstructions}, in preparation.
\end{thebibliography}
\end{document}